\theoremstyle{theorem}
\newtheorem{theorem}{Theorem}[section]
\newtheorem{prop}[theorem]{Proposition}
\newtheorem{lemma}[theorem]{Lemma}
\DeclareMathOperator*{\aplim}{ap\,lim}
\begin{document}

\title{Non approximate derivability of the Takagi function}

\author{Juan Ferrera}
\address{IMI, Departamento de An{\'a}lisis Matem{\'a}tico y Matem{\'a}tica Aplicada, Facultad Ciencias Ma\-te\-m{\'a}ticas, Universidad Complutense, 28040, Madrid, Spain}
\email{ferrera@ucm.es}

\author{Javier G\'omez Gil}
\address{Departamento de An{\'a}lisis Matem{\'a}tico y Matem{\'a}tica Aplicada, Facultad Ciencias Ma\-te\-m{\'a}ticas, Universidad Complutense, 28040, Madrid, Spain}
\email{gomezgil@mat.ucm.es}

\subjclass[2010]{Primary 26A27; Secondary 26A24}

\keywords{Approximate differentiability, Takagi function}

\begin{abstract}
The Takagi function is a classical example of a continuous nowhere differentiable
function. In this paper we prove that it is nowhere approximately  derivable.
\end{abstract}

\maketitle

\section{Introduction}

The Takagi function is probably the easiest example of
a continuous nowhere derivable function, it was introduced in 1903 (see \cite{Takagi}),  and ever since, it has caught the interest
of mathematicians, as a matter of fact it was often rediscovered, for instance, in 1930, using base ten instead of
base two by Van der Waerden (see \cite{VW}). For an extensive information about this function, see the surveys
\cite{AK} and \cite{Lagarias}.

We say that a function $f$ satisfies the $C^1$ Lusin property if for every $\varepsilon >0$ there exists
a $C^1$ function $g$ such that the set $\{ x: f(x)\not= g(x)\}$ has Lebesgue measure less than $\varepsilon$.
It is known that the Takagi's function does not satisfy the $C^1$ Lusin property, more precisely, that it
agrees with no $C^1$ function on any set of positive measure (see \cite{BK}). In particular, this implies that
the Takagi function does not satisfy the Lipschitz property (see \cite{Marcin}), however it is ``almost'' Lipschitz in the
sense  that it is $\alpha$-H\"{o}lder continuous for every $0<\alpha <1$ (see \cite{ShS}).

It is known that every function with non empty Fréchet subdifferential a.e. satisfies the $C^1$ Lusin property (see \cite{AFGGG}),
hence the Takagi function cannot be Fréchet subdifferentiable a.e., as a matter of fact it is known that it has non empty subdifferential
at $x$ if and only if $x$ is a dyadic number (see  \cite{FerreraGomezGil} or \cite{GoraStern}).
 
The Lusin properties are closely related with the approximate differentiability, for instance a one dimensional function
has the $C^1$ Lusin property if and only if it is approximately derivable a.e., for this result see \cite{Whitney} or \cite{LiuTai}
for similar results involving funcions on $\mathbb{R}^n$.

From all these results we may deduce that the Takagi function is not approximately derivable a.e., in other words:
there exists a positive measure set such that the Takagi function is not approximately derivable at any point of that set.
The aim of this paper is to give a direct, and relatively elementary, proof of a stronger result, namely that the
Takagi function is nowhere approximately derivable. Although it is known that almost every, in the sense of 
category, continuous function 
$f:\mathbb{R}\to \mathbb{R}$ is nowhere approximately derivable, see \cite{J}, it is not easy to provide examples of such functions, 
see for instance the examples that appear in \cite{J} or
\cite{K}, clearly the functions that they define are not as elementary as the Takagi
function. On the other hand the relevance of the Takagi function worth
the study of its approximate derivability.

\section{Results}
\label{sec:results}

We introduce some definitions and notation. $\mathcal{L}$ will denote the Lebesgue measure on $\mathbb{R}$.
For a function $f:\mathbb{R} \rightarrow \mathbb{R}$, $\ell$ is the
approximate limit of $f$ at $x$, $\aplim_{y\to x}f(y)=\ell$, if for every $\varepsilon >0$,
\begin{equation*}
\lim_{r\to 0^+}\frac{\mathcal{L}\bigl( (x-r,x+r)\cap \{ y\in \mathbb{R}: |f(y)-\ell|\geq \varepsilon \} \bigr)}{2r}=0.
\end{equation*}
We say that a function $f:\mathbb{R}\rightarrow \mathbb{R}$ is approximately derivable at $x$ if there
exists a real number $f'_{ap}(x)$, named approximate derivative of $f$ at $x$, such that
\begin{equation*}
 \aplim_{y\to x}\frac{f(y)-f(x)-f'_{ap}(x)(y-x)}{y-x}=0.
\end{equation*}

The following result, see \cite{K} page 139, is immediate:

\begin{prop}\label{sec:introduction-4}
 Let $f : \mathbb{R} \rightarrow \mathbb{R}$ be a function, let $x$ be a point of $\mathbb{R}$
and suppose that $f$ is approximately derivable at $x$ with approximate derivative $f'_{ap}(x)$.
Then, for
any real number $\alpha > f'_{ap}(x)$, we have
\begin{equation*}
\lim_{r\to 0^+} \frac{\mathcal{L}\left\{y \in (x - r, x + r) \setminus
  \{x\} :\; \frac{f (y) - f (x)}{y - x} \geq  \alpha \right\}}{2r} = 0.
\end{equation*}
Similarly, for any real number $\beta< f'_{ap}(x)$, we have
\begin{equation*}
\lim_{r\to 0^+} \frac{\mathcal{L}\left\{y \in (x - r, x + r) \setminus
  \{x\} :\; \frac{f (y) - f (x)}{y - x} \leq  \beta \right\}}{2r} = 0.
\end{equation*}
\end{prop}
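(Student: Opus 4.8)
I would prove this by directly unwinding the definition of the approximate limit applied to the ``linearization error'' of $f$ at $x$.

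For the first assertion, set $\varepsilon_0 = \alpha - f'_{ap}(x) > 0$ and consider
\[
g(y) = \frac{f(y)-f(x)-f'_{ap}(x)(y-x)}{y-x}\qquad (y\neq x),
\]
extended, say, by $g(x)=0$, a single point being irrelevant for all measure-theoretic statements below. The hypothesis is precisely $\aplim_{y\to x} g(y)=0$, so applying the definition of the approximate limit with this particular value $\varepsilon_0$ gives
\[
\lim_{r\to 0^+}\frac{\mathcal{L}\bigl((x-r,x+r)\cap\{y\in\mathbb{R}: |g(y)|\geq \varepsilon_0\}\bigr)}{2r}=0.
\]

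The key step is then a pointwise set inclusion. If $y\neq x$ and $\frac{f(y)-f(x)}{y-x}\geq\alpha$, then
\[
g(y)=\frac{f(y)-f(x)}{y-x}-f'_{ap}(x)\geq\alpha-f'_{ap}(x)=\varepsilon_0>0,
\]
hence $|g(y)|\geq\varepsilon_0$. Consequently, for every $r>0$,
\[
\Bigl\{y\in(x-r,x+r)\setminus\{x\}:\ \frac{f(y)-f(x)}{y-x}\geq\alpha\Bigr\}\subseteq (x-r,x+r)\cap\{y\in\mathbb{R}:|g(y)|\geq\varepsilon_0\}.
\]
Taking Lebesgue measure, dividing by $2r$, and letting $r\to 0^+$, the right-hand side tends to $0$ by the previous display; this proves the first limit.

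The second assertion is entirely symmetric: with $\delta_0 = f'_{ap}(x)-\beta>0$, the inequality $\frac{f(y)-f(x)}{y-x}\leq\beta$ forces $g(y)\leq-\delta_0$, so again $|g(y)|\geq\delta_0$, and one repeats the argument with $\varepsilon_0$ replaced by $\delta_0$. There is no genuine obstacle here; the only points requiring a little care are the harmless removal of $y=x$ so that the difference quotients and $g$ are well defined, and the observation that a one-sided inequality on the difference quotient translates into a two-sided (absolute value) inequality on $g$, which is exactly the form in which the definition of the approximate limit is stated.
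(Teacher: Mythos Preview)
Your argument is correct and is precisely the routine unfolding of the definition that the paper has in mind: the paper does not actually write out a proof of this proposition, merely calling it ``immediate'' and referring to \cite{K}, page~139. Your set inclusion via $g(y)=\frac{f(y)-f(x)}{y-x}-f'_{ap}(x)$ is the natural one-line justification.
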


We proceed to define the Takagi function.
For $n\in \mathbb{N}$, $n\geq 0$, let $D_n=\left\{k2^{-n}:\; k\in
  \mathbb{Z}\right\} \subset \mathbb{R}$  and let  $D=\cup D_n$ be the set of all dyadic numbers.
 The Takagi function $T:\mathbb{R}\rightarrow
 \mathbb{R}$   is defined as
\begin{equation}
T(x)=\sum_{k=1}^{\infty} g_k(x)=\lim_n G_n(x)\label{eq:1}
\end{equation}
where $g_k(x)=\min (|x-y|:\, y\in D_k)$, and $G_n=g_1+\dots +g_n$.
The aim of this paper is to prove the following Theorem:
 \begin{theorem}
   \label{sec:introduction}
   The Takagi function is nowhere approximately derivable.
\end{theorem}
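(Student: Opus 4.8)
The plan is to fix an arbitrary $x\in\mathbb{R}$ and contradict the hypothesis that $T$ is approximately derivable at $x$, with approximate derivative $L$, by feeding Proposition~\ref{sec:introduction-4} the following: it suffices to produce a constant $c>0$ and sequences $r_j\downarrow 0$ realizing one of three scenarios --- \textbf{(I)} for every $M$, $\mathcal{L}\{y\in(x-r_j,x+r_j):\tfrac{T(y)-T(x)}{y-x}\ge M\}\ge c\,r_j$; \textbf{(II)} the same with $\le -M$; or \textbf{(III)} there are fixed reals $\alpha<\beta$ with $\mathcal{L}\{y\in(x-r_j,x+r_j):\tfrac{T(y)-T(x)}{y-x}\le\alpha\}\ge c\,r_j$ \emph{and} $\mathcal{L}\{y\in(x-r_j,x+r_j):\tfrac{T(y)-T(x)}{y-x}\ge\beta\}\ge c\,r_j$. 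Indeed (I) with $M=L+1$, resp.\ (II) with $M>-L$, directly contradicts Proposition~\ref{sec:introduction-4}; and in (III), whichever of $\alpha<L$ or $\beta>L$ holds, Proposition~\ref{sec:introduction-4} is again contradicted.

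By $1$-periodicity we may assume $x\in[0,1)$ and write $x=0.a_1a_2a_3\dots$ in binary, using the expansion not ending in all $1$'s. Put $b_n=0.a_1\dots a_n$, $t_n=2^n(x-b_n)=0.a_{n+1}a_{n+2}\dots\in[0,1)$, $I_n=[b_n,b_n+2^{-n}]\ni x$, $c_\ell=\sum_{j=1}^{\ell}(-1)^{a_{j+1}}$, and $F=g_0+T=\sum_{k\ge 0}g_k$ where $g_0(z)=\mathrm{dist}(z,\mathbb{Z})$; thus $F$ is continuous and bounded. The first real step is a self-similarity identity: from $g_{n+i}(z)=2^{-n}g_i\!\big(2^n(z-b_n)\big)$ for $i\ge 0$ and $z\in I_n$, one obtains $T(z)=G_{n-1}(z)+2^{-n}F\!\big(2^n(z-b_n)\big)$, where $G_{n-1}$ is affine on $I_n$ with slope $c_{n-1}$. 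Folding $g_n$ into $F$ is exactly what dissolves the ``tent'' contributed by $g_n$ over $I_n$ into the self-similar term, leaving a genuine line. Writing $y=b_n+2^{-n}s$, for $s\in[0,1]\setminus\{t_n\}$ this yields
\[
\frac{T(y)-T(x)}{y-x}=c_{n-1}+\frac{F(s)-F(t_n)}{s-t_n}.
\]
Since every such $y$ lies within $2^{-n}$ of $x$, a set of $s$'s of measure $\lambda$ produces, inside $(x-2^{-n},x+2^{-n})$, a set of $y$'s of measure $2^{-n}\lambda$, i.e.\ of density $\lambda/2$.

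Now split according to the $\pm1$ walk $\ell\mapsto c_\ell$. If $\limsup_\ell c_\ell=+\infty$: pick a subinterval $S\subset[0,1]$ of length $\tfrac14$ at distance $\ge\tfrac14$ from $t_n$ (always possible); on $S$ the second term above is bounded in modulus by an absolute constant, so along the infinitely many $n$ with $c_{n-1}$ large the quotient exceeds $M$ on the $y$'s with $s\in S$, a set of density $\ge\tfrac18$ in $(x-2^{-n},x+2^{-n})$ --- this gives (I). If $\liminf_\ell c_\ell=-\infty$, (II) follows symmetrically. The remaining case is $c_\ell$ bounded (this includes $x=\tfrac13$, where the difference quotients at $x$ stay bounded, so nothing diverges and one must detect oscillation instead). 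Here some value $c^*$ is attained by $c_{n-1}$ for $n$ in an infinite set, along which, by compactness of $[0,1]$, we may assume $t_n\to t^*$. On a nondegenerate closed interval $J\subset[0,1]$ with $\mathrm{dist}(J,t^*)>0$ (e.g.\ $[0,\tfrac12]$ or $[\tfrac12,1]$), the maps $s\mapsto\tfrac{F(s)-F(t_n)}{s-t_n}$ converge uniformly to $h(s):=\tfrac{F(s)-F(t^*)}{s-t^*}$ (read as $F(s)/s$ or $F(s)/(s-1)$ when $t^*\in\{0,1\}$). Because $F$ is not affine on any nondegenerate interval --- by nowhere derivability of $T$, or directly: if $F$ were affine on an interval, the same self-similarity forces $F$ affine on $[0,1]$, impossible since $F(0)=F(1)=0\neq F(\tfrac12)$ --- $h$ is not a.e.\ constant on $J$; choosing $\alpha',\beta'$ strictly between its essential infimum and supremum on $J$, both $\{s\in J:h(s)<\alpha'\}$ and $\{s\in J:h(s)>\beta'\}$ have positive measure, hence so do $\{s\in J:\tfrac{F(s)-F(t_n)}{s-t_n}\le\alpha'\}$ and $\{s\in J:\tfrac{F(s)-F(t_n)}{s-t_n}\ge\beta'\}$ for $n$ large in the sequence. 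Through the identity this is (III) with $\alpha=c^*+\alpha'$, $\beta=c^*+\beta'$, at scales $r_j=2^{-n_j}$.

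The main obstacle is precisely the bounded-$c_\ell$ case: there the difference quotients genuinely stay bounded near $x$, so no blow-up is available and the entire argument reduces to extracting --- at arbitrarily fine scales, and with density bounded away from $0$ --- two well-separated values of the difference quotient, which is exactly what the non-affineness of $F$ delivers once transported to all scales via the uniform convergence $\tfrac{F(s)-F(t_n)}{s-t_n}\to h(s)$. A secondary point needing care is the clean form of the self-similarity identity and of the slope formula $c_{n-1}=\sum_{j=1}^{n-1}(-1)^{a_{j+1}}$, including at dyadic $x$; but dyadic points require no separate treatment, since their $0$-ending expansion has $c_\ell\to+\infty$ and hence falls under case (I).
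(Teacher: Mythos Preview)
Your proof is correct, and it takes a route genuinely different from the paper's.

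The paper treats dyadic $x$ separately (Proposition~\ref{dimension1}) and, for non-dyadic $x$, argues via the explicit measure estimate of Lemma~\ref{sec:introduction-1}, obtained by looking at points $y$ within $2^{-n-5}$ of the dyadic endpoint $y_n$; this lemma is then applied at indices where the walk $G'_n(x)$ is at (or near) its $\liminf$ to produce two separated clusters of difference-quotient values (Proposition~\ref{sec:introduction-5}), and at indices where $G'_n(x)$ is large in the doubly-infinite case (Proposition~\ref{sec:introduction-6}). Your argument instead passes through the exact self-similarity identity $\frac{T(y)-T(x)}{y-x}=c_{n-1}+\frac{F(s)-F(t_n)}{s-t_n}$ on the whole dyadic block $I_n$, which immediately absorbs the dyadic case into $\limsup c_\ell=+\infty$ and reduces the unbounded-walk cases to a one-line density bound. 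In the bounded-walk case you replace the paper's explicit two-index application of Lemma~\ref{sec:introduction-1} by a soft compactness step ($t_{n_k}\to t^*$) together with the non-affineness of $F$, which yields two separated values of $h$ on $J$ and hence, by uniform convergence, case~(III). What you gain is a single, uniform mechanism and no dyadic case split; what you lose are the explicit constants (the paper gets $1/64$ everywhere, while your constant in case~(III) depends on $x$ through the non-constructive choice of $\alpha',\beta'$). One cosmetic point: your parenthetical ``e.g.\ $[0,\tfrac12]$ or $[\tfrac12,1]$'' fails when $t^*=\tfrac12$, but of course a shorter $J$ (say $[0,\tfrac14]$) works and the argument is unaffected.
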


We split the proof in several Propositions.
First, we assume that $x\not\in D$, we have that
for all $n$ there exist $x_n,y_n\in D_n$ such that
$x\in (x_n,y_n)$ and $(x_n,y_n)\cap D_n=\varnothing$.

It is clear that for $k< n$ we have that
$(x_n,y_n)\subset (x_k,y_k)$ and
\begin{equation}\label{eq:3}
  g_k(x')-g_k(x) = g'_k(x)(x'-x)
\end{equation}
for all $x'\in[x_n,y_n]$. Note that $g_k$ is derivable at $x$
because $x\not\in D$, moreover $g'_k(x)=\pm 1$. We start with the following estimation:

\begin{lemma}\label{sec:introduction-1}
  Let $x\not\in D$. If $g'_n(x)=1$ then
  \begin{equation}
    \label{eq:2}
    \mathcal{L}\left\{y
  :\;  0<|x-y|<2^{-n}, \,
      \frac{T(y)-T(x)}{y-x}\leq G'_{n-1}(x)+\frac25\right\} \geq \frac1{2^{n+5}}.
  \end{equation}

Analogously, if   $g'_n(x)=-1$ then
  \begin{equation}
   \label{eq:4}
    \mathcal{L}\left\{y
  :\; 0<|x-y|<2^{-n}, \,
      \frac{T(y)-T(x)}{y-x}\geq G'_{n-1}(x)-\frac25\right\} \geq \frac1{2^{n+5}}.
  \end{equation}
\end{lemma}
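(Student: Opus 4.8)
The plan is to exploit the exact self-similarity of $T$ on the dyadic cell $[x_n,y_n]$ that contains $x$, reducing \eqref{eq:2} (and, by reflection, \eqref{eq:4}) to one elementary estimate on $[0,1]$, and then to get the measure bound by looking at points $y$ close to the far endpoint $y_n$ (resp.\ $x_n$) rather than close to $x$ --- there the high-order part of $T$ is genuinely negligible and the affine term of slope $G'_{n-1}(x)$ dominates.

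Setting up: write $y_n=x_n+2^{-n}$ and $x=x_n+2^{-n}s_0$ with $s_0\in(0,1)$. By \eqref{eq:3}, $G_{n-1}$ is affine of slope $G'_{n-1}(x)$ on $[x_n,y_n]$, and for every $k\ge n$ the inclusion $D_n\subseteq D_k$ gives $g_k(x_n+2^{-n}s)=2^{-n}g_{k-n}(s)$, so $\sum_{k\ge n}g_k(x_n+2^{-n}s)=2^{-n}\phi(s)$, where $\phi:=\sum_{j\ge0}g_j$ (equivalently $\phi=g_0+T$; a uniformly convergent series, with $\phi\ge0$, and $\phi(1-s)=\phi(s)$ because each $g_j$ is even and $1$-periodic). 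Hence, for $y=x_n+2^{-n}s$ with $s\in[0,1]\setminus\{s_0\}$,
\[
\frac{T(y)-T(x)}{y-x}=G'_{n-1}(x)+\frac{\phi(s)-\phi(s_0)}{s-s_0}.
\]
Since $g_n(x_n+2^{-n}s)=2^{-n}\min(s,1-s)$ on $[x_n,y_n]$, the hypothesis $g'_n(x)=1$ means exactly $s_0\in(0,\tfrac12)$. So it is enough to prove: for every $s_0\in(0,\tfrac12)$ the set $\{s\in(0,1)\setminus\{s_0\}:(\phi(s)-\phi(s_0))/(s-s_0)\le\tfrac25\}$ has measure at least $2^{-5}$; then \eqref{eq:2} follows at once (the corresponding $y$ fill a set of measure $2^{-n-5}$ inside $\{y:0<|x-y|<2^{-n}\}$), and \eqref{eq:4} follows from the same statement applied via $s\mapsto1-s$ together with $\phi(1-s)=\phi(s)$.

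For the reduced claim I would take the interval $I=[1-2^{-5},1]$, of length $2^{-5}$. On $I$ one has $g_j(s)=1-s\le2^{-5}$ for $0\le j\le4$ --- because $I\subseteq[1-2^{-j-1},1]$, the descending half of a tooth of $g_j$, on which $g_j(s)=\operatorname{dist}(s,D_j)=1-s$ --- while $g_j(s)\le\|g_j\|_\infty=2^{-j-1}$ for $j\ge5$; summing, $\phi(s)\le5\cdot2^{-5}+\sum_{j\ge5}2^{-j-1}=6\cdot2^{-5}$ for all $s\in I$. On the other hand $s_0<\tfrac12\le1-2^{-5}\le s$ gives $s-s_0>\tfrac12-2^{-5}=15\cdot2^{-5}$, so, since $\phi(s_0)\ge0$,
\[
\phi(s_0)+\tfrac25(s-s_0)>\tfrac25\cdot15\cdot2^{-5}=6\cdot2^{-5}\ge\phi(s),
\]
i.e.\ $(\phi(s)-\phi(s_0))/(s-s_0)<\tfrac25$ for every $s\in I$; as $s_0\notin I$ and $\mathcal{L}(I)=2^{-5}$, this is the reduced claim.

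The only point requiring thought is the choice of scale. The naive idea --- perturbing by the descending branch of the current tooth $g_n$ just past its peak --- does not produce a bound uniform in the position $s_0$ of $x$ inside its cell: as $s_0\to0$ the later teeth $g_{n+1},g_{n+2},\dots$ can cancel the gain completely. Going instead to within $2^{-n-5}$ of $y_n$, the tail $\sum_{k>n}g_k$ is only of size $\sim 2^{-n-5}$ there while $T$ has returned to the affine graph of $G_{n-1}$, and the controlled slope $G'_{n-1}(x)$ wins. The exponent $5$ and the constant $\tfrac25$ are forced together by the arithmetic $6\cdot2^{-5}=\tfrac25\cdot15\cdot2^{-5}$ above, which is why the statement carries $1/2^{n+5}$. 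Everything else --- the identity $g_k(x_n+2^{-n}s)=2^{-n}g_{k-n}(s)$, the shape of the $g_j$ near $1$, and the checks $0<|x-y|<2^{-n}$ for the produced points --- is routine.
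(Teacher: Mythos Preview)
Your proof is correct and follows essentially the same approach as the paper: both look at points $y$ within $2^{-n-5}$ of the far endpoint $y_n$, bound the tail $\sum_{k\ge n}g_k(y)$ there by $5\cdot 2^{-n-5}+\sum_{k>4}2^{-n-k-1}=6\cdot 2^{-n-5}$, drop the nonnegative tail at $x$, and divide by $y-x>2^{-n-1}-2^{-n-5}=15\cdot 2^{-n-5}$ to get the slope bound $6/15=2/5$. Your self-similar rescaling to $[0,1]$ via $\phi=g_0+T$ is a tidy way to package the computation, but the choice of interval and the arithmetic are identical to the paper's.
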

\begin{proof}
  Let $x_n, y_n$ be as above, if
  $g'_n(x)=1$ then $x$ is closer to $x_n$ than to $y_n$, hence
  $y_n-x>2^{-n-1}$. For $0<t< 2^{-n-5}$  if
    $k=0,1,2,\dotsc$, as  $y_n\in
    D_n\subset D_{n+k}$ and $\sup |g_{n+k}|\leq  2^{-n-k-1}$, we have that
  \begin{equation*}
    g_{n+k}(y_n-t)\leq \min\left\{2^{-n-5}, 2^{-n-k-1}\right\}
  \end{equation*}
and then, by \eqref{eq:3},
  \begin{align*}
    T(y_n-t)-T(x)\leq & G_{n-1}(y_n-t)-G_{n-1}(x)+ \frac5{2^{n+5}}+ \sum_{k>
      4}\frac1{2^{n+k+1}} \\ \leq & G'_{n-1}(x)(y_n-t-x)+ \frac6{2^{n+5}}.
  \end{align*}
Therefore,
\begin{equation*}
  \frac{T(y_n-t)-T(x)}{y_n-t-x}\leq G'_{n-1}(x)+\frac2{5}.
\end{equation*}
since $y_n-t-x\geq 2^{-n-1}-2^{-n-5}=15\cdot 2^{-n-5}$.
 The case $g'_n(x)=-1$ is similar.
\end{proof}

\begin{prop}
  \label{sec:introduction-5}
  Let $x\not\in D$, if either $I=\liminf_n G'_n(x)$ or $S=\limsup_nG'_n(x)$,
 is finite then $T$ is not approximately derivable at $x$.
\end{prop}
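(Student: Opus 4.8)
The plan is a proof by contradiction that handles the two cases---$S=\limsup_n G'_n(x)$ finite, and $I=\liminf_n G'_n(x)$ finite---in a symmetric way, using Lemma~\ref{sec:introduction-1} together with Proposition~\ref{sec:introduction-4}. Assume first that $S$ is finite. Since $G'_n(x)=\sum_{k\le n}g'_k(x)$ is a partial sum of steps $\pm1$ (recall $g'_k(x)=\pm1$ because $x\notin D$), the number $S$ is an integer, $G'_n(x)\le S$ for all sufficiently large $n$, and $G'_n(x)=S$ for infinitely many $n$. Fix a large index $n$ with $G'_n(x)=S$. Then $G'_{n-1}(x)\le S$ forces $g'_n(x)=1$ and $G'_{n-1}(x)=S-1$, while $G'_{n+1}(x)\le S$ forces $g'_{n+1}(x)=-1$ (with $G'_n(x)=S$).

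Now invoke Lemma~\ref{sec:introduction-1} twice at such an $n$. Since $g'_n(x)=1$, \eqref{eq:2} gives
\[
\mathcal{L}\Bigl\{y:\ 0<|x-y|<2^{-n},\ \tfrac{T(y)-T(x)}{y-x}\le S-\tfrac35\Bigr\}\ \ge\ \tfrac1{2^{n+5}},
\]
and since $g'_{n+1}(x)=-1$ and $G'_n(x)=S$, applying \eqref{eq:4} with $n$ replaced by $n+1$ gives
\[
\mathcal{L}\Bigl\{y:\ 0<|x-y|<2^{-(n+1)},\ \tfrac{T(y)-T(x)}{y-x}\ge S-\tfrac25\Bigr\}\ \ge\ \tfrac1{2^{n+6}}.
\]
Dividing by the interval lengths $2\cdot2^{-n}$ and $2\cdot2^{-(n+1)}$, each of these sets occupies a proportion at least $2^{-6}$ of the corresponding interval centered at $x$. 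As this happens for infinitely many $n$, we obtain two sequences of radii tending to $0$ along which neither proportion tends to $0$.

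Suppose, towards a contradiction, that $T$ is approximately derivable at $x$ with approximate derivative $L$. If $L>S-\tfrac35$, pick $\beta$ with $S-\tfrac35\le\beta<L$; the second conclusion of Proposition~\ref{sec:introduction-4} then contradicts the first displayed bound, so in fact $L\le S-\tfrac35$. If $L<S-\tfrac25$, pick $\alpha$ with $L<\alpha\le S-\tfrac25$; the first conclusion of Proposition~\ref{sec:introduction-4} contradicts the second displayed bound, so $L\ge S-\tfrac25$. Since $S-\tfrac35<S-\tfrac25$, no such $L$ can exist, which proves the proposition when $S$ is finite. When $I$ is finite the argument is the mirror image: for infinitely many large $n$ with $G'_n(x)=I$ one has $g'_n(x)=-1$, $G'_{n-1}(x)=I+1$ and $g'_{n+1}(x)=1$, and Lemma~\ref{sec:introduction-1} then forces any approximate derivative to satisfy simultaneously $L\ge I+\tfrac35$ and $L\le I+\tfrac25$, again impossible. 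I expect the only delicate point to be the bookkeeping of the partial sums $G'_n(x)$---pinning down, at the indices where this $\pm1$ walk revisits its extreme value, the signs $g'_n(x)$, $g'_{n+1}(x)$ and the value $G'_{n-1}(x)$---so that the two applications of Lemma~\ref{sec:introduction-1} straddle the putative approximate derivative from both sides; the $\tfrac25$ in the lemma is exactly what keeps the gap $(S-\tfrac35,S-\tfrac25)$ nonempty.
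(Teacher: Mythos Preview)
Your proof is correct and follows essentially the same approach as the paper's: both argue by contradiction, use the integrality of $G'_n(x)$ to locate infinitely many indices where the $\pm1$ walk attains its extreme value, read off the signs of the adjacent increments, and then apply Lemma~\ref{sec:introduction-1} at two consecutive indices to trap the putative approximate derivative in an empty interval via Proposition~\ref{sec:introduction-4}. The only cosmetic differences are that the paper treats the case $I$ finite explicitly (you treat $S$ finite) and indexes its subsequence so that $G'_{n_k-1}(x)=I$, applying the lemma at $n_k-1$ and $n_k$, whereas you index so that $G'_n(x)=S$ and apply the lemma at $n$ and $n+1$; the resulting contradiction ($I+\tfrac25<I+\tfrac35$, respectively $S-\tfrac35<S-\tfrac25$) is the same.
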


\begin{proof}
Let us suppose that $I$ is finite and $T$ is approximately derivable at $x$.
  As $I$ is finite and $G'_n(x)\in \mathbb{Z}$ for all
    $n$, then  there exists $n_0$ such that $G'_n(x)\geq I$ for
  all $n\geq n_0$ and there exists a strictly  increasing subsequence
  $(n_k)_{k\geq 0}$ such that $G'_{n_k-1}(x)=I$ for all $k$. It is
  clear that  $g'_{n_k-1}(x)=-1$ and $g'_{n_k}(x)=+1$ for all $k>0$
  and hence, by Lemma \ref{sec:introduction-1},
  \begin{multline*}
 \limsup_{r\to 0^+} \frac{ \mathcal{L}\left\{y : \, 0<|y-x|<r, \,
      \frac{T(y)-T(x)}{y-x}\leq I+\frac25\right\}  }{2r} 
\\
\geq \limsup_k \frac{ \mathcal{L}\left\{y
  :\;  0<|y-x|<2^{-n_k}, \,
      \frac{T(y)-T(x)}{y-x}\leq G'_{n_k-1}(x)+\frac25\right\} }{2^{-n_k+1}}
\\
\geq \frac{1}{2^6}>0
\end{multline*}
and
\begin{multline*}
  \limsup_{r\to 0^+} \frac{ \mathcal{L}\left\{y : \, 0<|y-x|<r, \,
      \frac{T(y)-T(x)}{y-x}\geq I+1-\frac25\right\} }{2r} 
\\
\geq \limsup_k \frac{ \mathcal{L}\left\{y :\; 0<|y-x|<2^{-n_k+1}, \,
    \frac{T(y)-T(x)}{y-x}\geq G'_{n_k-2}(x)-\frac25\right\}
}{2^{-n_k+2}} 
\\
\geq \frac{1}{2^6}>0
\end{multline*}
since $I+1=G'_{n_k-2}(x)$. We deduce, by Proposition \ref{sec:introduction-4},
the following contradiction.

\begin{equation*}
    T'_{ap}(x) \leq I+\frac2{5} < I+1-\frac2{5}\leq  T'_{ap}(x).
  \end{equation*}
The case $S$ finite is similar.
\end{proof}

The alternative occurs when both limits are infinite. Then we also have that $T$ is not approximately derivable.

\begin{prop} \label{sec:introduction-6}
Let $x\not\in D$, if
$I=\liminf_n G'_n(x)$ and $S=\limsup_nG'_n(x)$ are infinite,
  then $T$ is not approximately derivable at $x$.
\end{prop}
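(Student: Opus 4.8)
The plan is to argue by contradiction: I would assume that $T$ is approximately derivable at $x$ with approximate derivative $L:=T'_{ap}(x)$, and then reuse the local estimate of Lemma \ref{sec:introduction-1} in the same spirit as in the proof of Proposition \ref{sec:introduction-5}, except that now $G'_{n-1}(x)$ is allowed to run off to $+\infty$ or to $-\infty$ instead of being frozen at a fixed integer. Two preliminary remarks organize the argument. First, since $x\notin D$ we have $g'_n(x)=+1$ for infinitely many $n$ and $g'_n(x)=-1$ for infinitely many $n$ (if either failed, $x$ would be dyadic). Second, since $I\le S$ and both are infinite, at least one of $S=+\infty$ or $I=-\infty$ must hold; the two cases are mirror images of each other, so it is enough to treat $S=+\infty$.

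Assume $S=\limsup_nG'_n(x)=+\infty$. I would pick a strictly increasing sequence $(m_j)$ with $G'_{m_j}(x)\to+\infty$ and, for each $j$, take $n_j$ to be the least index $>m_j$ with $g'_{n_j}(x)=-1$, which exists by the first remark. Every index strictly between $m_j$ and $n_j$ carries $g'=+1$, so $G'_{n_j-1}(x)\ge G'_{m_j}(x)\to+\infty$ while $g'_{n_j}(x)=-1$; hence \eqref{eq:4} applies with $n=n_j$. Setting $r=2^{-n_j}$ and using $2^{-n_j-5}/2^{-n_j+1}=2^{-6}$, one gets, for every fixed $\alpha\in\mathbb{R}$ and every $j$ large enough that $G'_{n_j-1}(x)-\tfrac25\ge\alpha$,
\begin{equation*}
\frac{\mathcal{L}\left\{y:\ 0<|x-y|<2^{-n_j},\ \frac{T(y)-T(x)}{y-x}\ge\alpha\right\}}{2\cdot 2^{-n_j}}\ \ge\ \frac{1}{2^{6}} .
\end{equation*}
Therefore $\limsup_{r\to0^+}\frac{1}{2r}\,\mathcal{L}\bigl\{y\in(x-r,x+r)\setminus\{x\}:\ \tfrac{T(y)-T(x)}{y-x}\ge\alpha\bigr\}\ge 2^{-6}>0$ for \emph{every} real $\alpha$, and choosing $\alpha>L$ contradicts Proposition \ref{sec:introduction-4}.

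The case $I=-\infty$ is symmetric: I would choose $(m_j)$ strictly increasing with $G'_{m_j}(x)\to-\infty$, let $n_j$ be the least index $>m_j$ with $g'_{n_j}(x)=+1$, so that $G'_{n_j-1}(x)\le G'_{m_j}(x)\to-\infty$ and $g'_{n_j}(x)=+1$, and apply \eqref{eq:2} to produce, for every real $\beta$, a positive upper density near $x$ of points $y$ with $\tfrac{T(y)-T(x)}{y-x}\le\beta$; taking $\beta<L$ again contradicts Proposition \ref{sec:introduction-4}. I expect the only point needing care to be the elementary bookkeeping hidden in the two preliminary remarks: that ``$I$ and $S$ both infinite'' really does reduce to ``$S=+\infty$ or $I=-\infty$'' (so there is no escaping two-sided oscillation), and that because $x\notin D$ one may always wait for the next index with $g'_n(x)$ of the prescribed sign, during which wait $G'_{n-1}(x)$ only becomes more extreme. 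Everything else is the density computation already carried out in Proposition \ref{sec:introduction-5}, the constant $2^{-6}$ arising as $2^{-n-5}/2^{-n+1}$.
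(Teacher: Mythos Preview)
Your proof is correct and follows essentially the same strategy as the paper: locate indices $n$ with $g'_n(x)=-1$ while $G'_{n-1}(x)\to+\infty$ (or the mirror situation), apply Lemma~\ref{sec:introduction-1} to get density at least $2^{-6}$ at the scales $r=2^{-n}$, and contradict Proposition~\ref{sec:introduction-4}. Your construction of $n_j$ as ``the first index after $m_j$ with $g'_{n_j}(x)=-1$'' is equivalent to the paper's choice of local maxima of $G'_n(x)$; if anything, your version is slightly more self-contained, since the case $I=S$ that the paper calls ``obvious'' is absorbed into your treatment of $S=+\infty$ (resp.\ $I=-\infty$) via your first preliminary remark.
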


\begin{proof}
  As the result is obvious if $I=S$, we will suppose that $I=-\infty$
  and $S=+\infty$. Then, there exists a subsequence $(n_k)_k$
  such that
  \begin{align*}
    & G'_{n_k}(x)< G'_{n_{k+1}}(x),& \quad  & g'_{n_k+1}(x)=-1;\quad&
  \end{align*}
for all $k$, and
\begin{equation*}
  G'_{n_k}(x)\xrightarrow[k]{}+\infty.
\end{equation*}

By Proposition \ref{sec:introduction-4}, if $T$ is approximately derivable at
$x$ and $\alpha > T'_{ap}(x)$ there exists $\delta>0$ such that
\begin{equation}\label{eq:5}
  \mathcal{L}\left\{y \in (x - r, x + r) \setminus
  \{x\} :\; \frac{T (y) - T (x)}{y - x} \geq  \alpha
\right\}<\frac1{32}\, r
\end{equation}
for $0<r<\delta$. But if $k$ is such that $G'_{n_k-1}(x)>\alpha+1$ and
$2^{-n_k}<\delta$ we have, by  Lemma \ref{sec:introduction-1}, that
\begin{equation*}
  \begin{split}
    \mathcal{L}\left\{y: 0<|y-x|<\frac1{2^{n_k}},
      \frac{T(y)-T(x)}{y-x}\geq G'_{n_k-1}(x)-\frac25\right\} \\
    \geq \frac1{2^{n_k+5}}=\frac1{32}\,\frac1{2^{n_k}}
  \end{split}
 \end{equation*}
that contradicts \eqref{eq:5}.
\end{proof}
It only remains to prove that $T$ is not approximately derivable at any dyadic number.

\begin{prop}\label{dimension1}
If $x\in D$, then $T$ is not approximately derivable at $x$.
\end{prop}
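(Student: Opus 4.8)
The idea is that at a dyadic point $x$ the Takagi function grows, on the right of $x$, roughly like $h\mapsto h\log_2(1/h)$, so that \emph{every} right-hand difference quotient $\frac{T(x+h)-T(x)}{h}$ with $h>0$ small is large; by Proposition~\ref{sec:introduction-4} this rules out approximate derivability at $x$.

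First I would fix $q$ with $x\in D_q$, so that $x\in D_k$ and hence $g_k(x)=0$ for every $k\ge q$. I would then split
\[
T(x+h)-T(x)=\sum_{k=1}^{q-1}\bigl(g_k(x+h)-g_k(x)\bigr)+\sum_{k\ge q}g_k(x+h),
\]
and bound the first, finite sum from below by $-(q-1)\,|h|$, using that each $g_k$ is $1$-Lipschitz. For the tail, since $x\in 2^{-k}\mathbb{Z}$ one has $g_k(x+h)=\mathrm{dist}(h,2^{-k}\mathbb{Z})=g_k(h)$ for every $k\ge q$; moreover, if $0<|h|<2^{-M-1}$ with $M\ge q$, then each term with $q\le k\le M$ equals exactly $|h|$, so $\sum_{k\ge q}g_k(x+h)\ge(M-q+1)\,|h|$. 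Combining the two estimates gives $T(x+h)-T(x)\ge(M-2q+2)\,|h|$ whenever $0<|h|<2^{-M-1}$, and in particular $\frac{T(x+h)-T(x)}{h}\ge M-2q+2$ for $0<h<2^{-M-1}$.

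To finish I would argue by contradiction: suppose $T$ is approximately derivable at $x$ and put $\alpha=T'_{ap}(x)+1$. Choosing $M$ so large that $M\ge q$ and $M-2q+2\ge\alpha$, the estimate above yields, for every $0<r\le 2^{-M-1}$, the inclusion
\[
(x,x+r)\subseteq\bigl\{y\in(x-r,x+r)\setminus\{x\}:\ \frac{T(y)-T(x)}{y-x}\ge\alpha\bigr\},
\]
so the density of this set at $x$ is at least $\frac12$ for all sufficiently small $r$. This contradicts Proposition~\ref{sec:introduction-4}, which would force that density to tend to $0$; hence $T$ is not approximately derivable at $x$.

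The only step requiring care is the lower bound $T(x+h)-T(x)\ge(M-2q+2)\,|h|$: one must handle the finitely many indices $k<q$, where $g_k$ may itself have a corner at $x$, but these contribute only the fixed linear error $-(q-1)\,|h|$, whereas the tail contributes an arbitrarily large multiple of $|h|$, so the blow-up of the right-hand quotients survives. (The point $x=0$ is covered by taking $q=0$ with an empty first sum; one could equally use the left-hand quotients, which tend to $-\infty$, but one side already suffices.)
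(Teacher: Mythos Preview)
Your argument is correct and follows essentially the same route as the paper: both proofs split $T(x+h)-T(x)$ into a finite head bounded by a fixed multiple of $|h|$ and a tail contributing $(M-q+1)|h|$ from the indices $q\le k\le M$, yielding $T(x+h)-T(x)\ge (M-2q+2)|h|$ for $|h|<2^{-M-1}$ and then invoking Proposition~\ref{sec:introduction-4}. Your presentation is slightly more explicit in isolating the right-hand quotients and stating the density lower bound $\tfrac12$, but the underlying estimate and the conclusion are identical to the paper's.
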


\begin{proof}
  If $x\in D$, let $n_0$ be the smaller integer such that $x\in D_{n_0+1}$. Let $n$ be such that $n>2n_0$. If $2|h|<2^{-n}$ then
$g_k(x+h)=|h|$ for every $k$, $n_0< k\leq n$. Thus
\begin{equation*}
  \begin{split}
    T(x+h)-T(x) \geq & \sum_{k=1}^{n_0}\bigl( g_k(x+h)-g_k(x)\bigr)
    +\sum_{k=n_0+1}^ng_k(x+h) 
    \\
    \geq & -n_0|h|+(n-n_0)|h|=\left(n-2n_0\right)|h|.
  \end{split}
\end{equation*}

This implies that
\begin{equation*}
  \mathcal{L}\left\{y: 0<|y-x|<\frac1{2^{n+1}},
 \frac{T(y)-T(x)}{y-x}\geq n-2n_0\right\}   \geq  \frac1{4}\,{2^{-n}}.
   \end{equation*}
It follows from Proposition \ref{sec:introduction-4} that $T$ is not approximately derivable at $x$.
 \end{proof}

Joining  Propositions \ref{sec:introduction-5}, \ref{sec:introduction-6},
and \ref{dimension1}, we have the proof of Theorem  \ref{sec:introduction}

\end{document}